\def\reels{\mathbb{R}}
\def\complexes{\mathbb{C}}
\def\g {\mathfrak}
\DeclareMathOperator{\sign}{sign}\DeclareMathOperator{\Hom}{Hom}
\def\nac{\nabla}
\def\cg{g}
\def\kkg{\tilde g}
\def\fvf{v}
\def\rvf{\xi}
\def\parvf{r}
\def\cpvf{\tilde r}
\def\basic#1{{#1}^*}
\def\rec{\tilde{R}}
\def\curvd{R^{\mathcal{D}}}
\def\nad{\nabla^{\mathcal{D}}}
\newtheorem{theo}{Theorem}
\newtheorem{prop}[theo]{Proposition}
\newtheorem{lem}[theo]{Lemma}
\title{On the Holonomy of Kaluza-Klein metrics}
\author[Thomas Krantz]{Thomas Krantz}
\thanks{email: krantz@mathematik.hu-berlin.de\\mail: Institut für Mathematik, Humboldt-Universität, 10099 Berlin, Germany\\
The author acknowledges funding by the Fonds National de la Recherche (mesure d'accompagnement MA6, Research Ministry, G.-D. of Luxembourg) }
\begin{document}
\begin{abstract}
We investigate Kaluza-Klein metrics with a recurrent light-like vector field over a pseudo-Riemannian manifold $(B,\cg)$.
\end{abstract}
\maketitle

\section{Introduction}

Principal $S^1$-bundles have been used by physicists to unify Einsteins general relativity theory with electrodynamics. This theory, by their main contributors known as Kaluza-Klein theory, has been later on generalized to bundles where the fiber is no longer an abelian Lie group(Yang-Mills theory). We refer to \cite{baum} for a general picture of the mathematical backgrounds of Gauge theory.

We focus here in particular on the construction of pseudo-Riemannian manifolds with the so called Kaluza-Klein-metric.
Given a pseudo-Riemannian manifold $(B,g)$, we consider a principal $S^1$-bundle $P$ over $B$ together with a connection form $A$ on it. For the Kaluza-Klein-metric a fundamental vector corresponding to the $S^1$-action has length $\pm 1$ whereas the horizontal spaces, corresponding to the connection form $A$, inherit the metric from the base manifold. Finally any horizontal vector is orthogonal to any vertical vector({\em i.e.} which is tangent to the fiber). The well known Boothby-Wang fibration now gives examples of such principal $S^1$-bundles over a projective symplectic manifold and it is known that if the base $B$ is a projective Kähler manifold, $P$ will carry a natural Sasakian structure.
We can fix transverse holonomy of the Sasakian manifold by fixing the holonomy of the base manifold.
We refer to \cite{blair} and \cite{bg2} for a detailed treatment of Sasakian geometry.

It is now known that Lorentzian connections on a $n+1$-dimensional manifold either admit full $\g s \g o(1,n)$-holonomy or there is a holonomy-invariant direction, case in which we speak of special Lorentzian holonomy.
By the work of L. Bérard Bergery, A. Ikemakhen (\cite{LBBAI}) and T. Leistner (\cite{leistner1}) indecomposable special holonomy
has been classified.
In section \ref{kkmrvf} we explore more generally ({\em i.e.} in the pseudo-Riemannian case) principal $S^1$-bundles with a Kaluza-Klein-metric admitting a holonomy-invariant direction or similarly a recurrent vector field. Under the condition that the recurrent vector field projects fiberwise on a fixed direction, it turns out that the recurrent vector field has to be parallel and projects onto a Killing field on the base manifold and that $P$ has to admit a flat connection. Inversely we can construct examples of pseudo-Riemannian manifolds with a recurrent vector field by considering $S^1$-bundles with a flat connection over a base manifold with a Killing field and form a Kaluza-Klein-metric. In particular we can consider base manifolds to be K-contact or Sasakian.

Finally we can iterate the construction: A first Boothby-Wang construction over a projective symplectic manifold yields a K-contact manifold, and a second $S^1$-bundle-construction yields a manifold on which there is a Kaluza-Klein-metric with a parallel vector field. In case the starting manifold was projective Kähler the special Lorentzian holonomy (of type II) is completely known.

{\sc Acknowledgements.} I thank Helga Baum, Mihaela Pilca, Thomas Neukirchner and Kordian Laerz among many others for useful discussions.

\section{Transverse connections}
Let $(B,\cg)$ be a pseudo-Riemannian manifold with Levi-Civita covariant derivative $\nac$.

Let $\xi$ be a vector field on $(B,\cg)$ such that $\cg(\xi,\xi)=\sigma$ with $\sigma=\pm 1$.

Note $\mathcal{D}=\mathcal{D}_\xi$ the sub-bundle $\xi^\perp$.
Note $p(X)$ the orthogonal projection of the vector field $X$ on $B$ onto $\mathcal{D}$.
Note $\nad$ the covariant derivative on the bundle $\mathcal{D}$ defined by:
$\nad_{\rvf} Y = p([\rvf,Y])$, $\nad_{X} Y=p(\nac_X Y)$ for $Y$ a section of $\mathcal{D}$ and $X$ orthogonal to $\rvf$.

It can be readily checked that $\nad$ is really a covariant derivative and that $\nad$ is torsion-free in the sense: $\nad_X Y - \nad_Y X = p([X,Y])$ for $X,Y$ sections of $\mathcal{D}$.
Furthermore $\nad$ preserves the bundle metric on $\mathcal{D}$ obtained by restriction of $\cg$.

Note $Hol^{\mathcal{D}}_o$ the holonomy group of the transverse connection in the point $o$ and $\g h \g o \g l^{\mathcal{D}}_o$ its Lie algebra.

\section{Kaluza-Klein metrics}
\subsection{Definitions and basic results}\label{KKC}
Given a pseudo-Riemannian manifold $(B,\cg)$, let $\nac$ be its Levi-Civita covariant derivative.
Let $\g g$ be the Lie algebra of the group $G=U(1)$. $S^1$ is the underlying manifold of the group $G$. $\g g$ identifies to $i \reels$.
We consider $(P,\pi,B)$ a principal $S^1$-bundle over the manifold $B$.
Let $A: TP\to \g g$ be a connection form on the bundle $P$, {i.e.}
for $g\in G$ and $X$ a vector field on $P$, $A(X \cdot g)=Ad(g^{-1})A(X)$ ($=A(X)$ in this case), and for $a\in \g g$, $\xi\in P$,
$A(\frac{d}{dt}_{t=0}(\xi\cdot \exp(ta)))=a$.

Let $\fvf$ be the vector field on $P$ such that $T\pi(\fvf)=0$ and $A(\fvf)$ is constant equal to $i$.

For $X$ a vector field on $B$, we call {\em horizontal lift of $X$} and note $\basic{X}$ the unique vector field on $P$ such that $A(\basic{X})=0$ and $T\pi(\basic{X})=X$.

Recall that the curvature form $\Omega$ in $\Gamma((T^*B\wedge T^*B)\otimes\g g)$ of the connection form $A$ verifies $\Omega(X,Y):=-A([\basic{X},\basic{Y}])$.
As $S^1$ is abelian, $\Omega$ is closed.
We have:

\begin{prop} \label{brackets}
$[\basic{X},\fvf]=0$,

$[\basic{X},\basic{Y}]=\basic{[X,Y]}+i\Omega(X,Y)\fvf$.
\end{prop}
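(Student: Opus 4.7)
Both identities follow directly from the definitions of horizontal lift, connection form and curvature, plus the general fact that Lie brackets of $\pi$-related vector fields are $\pi$-related. The main thing to be careful about is the factor of $i$, coming from the normalisation $A(\fvf)=i$.

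For the first identity, the plan is to observe that $\basic{X}$ is $G$-invariant. Indeed, the horizontal distribution $\ker A$ is $G$-invariant because $A$ is equivariant; and $T\pi\circ(R_g)_*=T\pi$, so $(R_g)_*\basic{X}$ is horizontal and projects to $X$, hence equals $\basic{X}$ by uniqueness of the horizontal lift. Since $\fvf$ generates the $G$-action (being $\frac{d}{dt}\big|_{t=0}(\xi\cdot\exp(ti))$ after rescaling), the Lie derivative $\mathcal{L}_{\fvf}\basic{X}=[\fvf,\basic{X}]$ vanishes, giving $[\basic{X},\fvf]=0$.

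For the second identity, I would decompose $[\basic{X},\basic{Y}]$ into its horizontal and vertical parts with respect to the splitting $TP=\ker A\oplus\reels\fvf$. The horizontal part: since $\basic{X}$ and $X$ are $\pi$-related (as are $\basic{Y}$ and $Y$), their brackets are $\pi$-related, so $T\pi[\basic{X},\basic{Y}]=[X,Y]$; the unique horizontal vector field projecting to $[X,Y]$ is $\basic{[X,Y]}$, giving the horizontal component. The vertical part is of the form $\alpha\,\fvf$ where $\alpha$ is determined by applying $A$: using the definition $\Omega(X,Y)=-A([\basic{X},\basic{Y}])$ together with $A(\fvf)=i$, one gets $\alpha\cdot i=-\Omega(X,Y)$, i.e.\ $\alpha=i\Omega(X,Y)$. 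Adding the two parts yields the claimed formula.

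\textbf{Main obstacle.} There is no real obstacle; the only point requiring care is tracking the $i$ in the normalisation $A(\fvf)=i$ so that the vertical coefficient comes out as $i\Omega(X,Y)$ rather than $-i\Omega(X,Y)$ or $\Omega(X,Y)$. Everything else is a direct unpacking of the definitions stated in Section~\ref{KKC}.
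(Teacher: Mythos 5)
Your argument is correct and is precisely the standard computation that the paper waves at with the words ``Classic results'': $G$-invariance of the horizontal lift (using equivariance of $A$, trivial here since $G$ is abelian) gives the vanishing of $[\basic{X},\fvf]$, and the horizontal/vertical splitting together with $\pi$-relatedness and the sign convention $\Omega(X,Y)=-A([\basic{X},\basic{Y}])$, $A(\fvf)=i$, gives the second identity with the correct coefficient $i\Omega(X,Y)$ (since $1/i=-i$). Nothing is missing.
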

\begin{proof}
Classic results.
\end{proof}

Fix $\sigma=\pm 1$. Note $\kkg$ the pseudo-Riemannian metric $\pi^*\cg + \sigma A\otimes A$ on $P$, called in the following {\em Kaluza-Klein-metric}. Let $D$ be the Levi-Civita covariant derivative corresponding to $\kkg$.

Note $\phi$ the section of $End_B(TB)$ such that $\cg(\phi X,Y)=\sigma \frac{i}{2}\Omega(X,Y)$.

\begin{prop}
$\sigma\frac{i}{2}(\nac \Omega)(X,Y,Z)=\cg((\nac_X \phi)Y,Z)$
\end{prop}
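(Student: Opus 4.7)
The plan is to differentiate the defining identity $\cg(\phi Y,Z)=\sigma\frac{i}{2}\Omega(Y,Z)$ covariantly in the direction of $X$ and then read off the result by comparing the Leibniz rule on both sides.

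First, I would expand the left-hand side $X\cdot \cg(\phi Y,Z)$ using the fact that $\nac$ is the Levi-Civita connection, hence metric: this gives $\cg(\nac_X(\phi Y),Z)+\cg(\phi Y,\nac_X Z)$, which in turn splits as $\cg((\nac_X\phi)Y,Z)+\cg(\phi\,\nac_X Y,Z)+\cg(\phi Y,\nac_X Z)$. Second, I would expand the right-hand side using the standard formula $(\nac_X\Omega)(Y,Z)=X\cdot\Omega(Y,Z)-\Omega(\nac_X Y,Z)-\Omega(Y,\nac_X Z)$, so that
\[
\sigma\tfrac{i}{2}\,X\!\cdot\!\Omega(Y,Z)=\sigma\tfrac{i}{2}(\nac_X\Omega)(Y,Z)+\sigma\tfrac{i}{2}\Omega(\nac_X Y,Z)+\sigma\tfrac{i}{2}\Omega(Y,\nac_X Z).
\]

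Third, the two correction terms $\sigma\frac{i}{2}\Omega(\nac_X Y,Z)$ and $\sigma\frac{i}{2}\Omega(Y,\nac_X Z)$ on the right match exactly $\cg(\phi\,\nac_X Y,Z)$ and $\cg(\phi Y,\nac_X Z)$ on the left, again by the very definition of $\phi$. After cancelling these terms I obtain
\[
\cg((\nac_X\phi)Y,Z)=\sigma\tfrac{i}{2}(\nac_X\Omega)(Y,Z),
\]
which is the claim.

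There is no genuine obstacle: the statement is a tensorial reformulation of the identity $\cg(\phi\,\cdot\,,\,\cdot\,)=\sigma\frac{i}{2}\Omega$, and the argument is a routine verification that $\nac$, acting by the Leibniz rule, preserves this equality. The only point worth being careful about is to ensure the Leibniz terms on both sides genuinely cancel, which they do because $\phi$ is $\cg$-skew (inherited from the antisymmetry of $\Omega$) and because $\nac$ is metric.
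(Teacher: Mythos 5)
Your proof is correct: differentiating the defining identity $\cg(\phi Y,Z)=\sigma\frac{i}{2}\Omega(Y,Z)$ with the metric property of $\nac$ and the Leibniz rule for $\nac\phi$ and $\nac\Omega$, then cancelling the matching correction terms, is exactly the routine verification the paper evidently intends (it omits the proof altogether). The only cosmetic remark is that the cancellation of the correction terms follows directly from the definition of $\phi$ applied to the arguments $\nac_X Y$ and $\nac_X Z$; no appeal to the skew-symmetry of $\phi$ is actually needed.
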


\begin{prop}
$D_{\fvf} \fvf=0$,

$D_{\fvf} \basic{X} = D_{\basic{X}}\fvf = \basic{(\phi X)}$

$D_{\basic{X}} \basic{Y} = \basic{(\nac_{X} Y)}+\frac{i}{2}\Omega(X,Y)\fvf$.
\end{prop}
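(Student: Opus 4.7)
\medskip

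\noindent\textbf{Proof plan.} The natural approach is to apply the Koszul formula to the Levi-Civita connection $D$ of $\kkg$,
\begin{equation*}
2\kkg(D_U V,W)=U\kkg(V,W)+V\kkg(U,W)-W\kkg(U,V)+\kkg([U,V],W)-\kkg([U,W],V)-\kkg([V,W],U),
\end{equation*}
and to test each of the three identities against the frame made of $\fvf$ and arbitrary horizontal lifts $\basic{Z}$. The nondegenerate splitting $TP=\reels\fvf\oplus\mathcal{D}$, together with $\kkg(\fvf,\fvf)=\sigma\cdot i^2=-\sigma$ constant, $\kkg(\fvf,\basic{Z})=0$, and $\kkg(\basic{X},\basic{Y})=\cg(X,Y)\circ\pi$, will let me read off both the vertical and the horizontal component of $D_UV$.

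First I would handle $D_\fvf\fvf$: testing against $\fvf$ one only gets derivatives of the constant $\kkg(\fvf,\fvf)$, and testing against $\basic{Z}$ every term in Koszul vanishes thanks to $[\fvf,\basic{Z}]=0$ (Proposition \ref{brackets}) and to $\kkg(\fvf,\basic{Z})=0$. Next, for $D_\fvf \basic{X}$, the vertical component vanishes for the same reasons, while the horizontal component is computed against $\basic{Y}$: all directional derivatives in the Koszul formula vanish (functions of the form $\cg(X,Y)\circ\pi$ are constant along the fiber), the brackets $[\fvf,\basic{X}]$ and $[\fvf,\basic{Y}]$ vanish, and only $\kkg([\basic{X},\basic{Y}],\fvf)$ survives. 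Using Proposition \ref{brackets} and the identity $\kkg(\fvf,\fvf)=-\sigma$, this contributes $-\sigma i\,\Omega(X,Y)$, hence $2\kkg(D_\fvf \basic{X},\basic{Y})=\sigma i\,\Omega(X,Y)=2\cg(\phi X,Y)=2\kkg(\basic{(\phi X)},\basic{Y})$. The identity $D_{\basic{X}}\fvf=\basic{(\phi X)}$ then follows by torsion-freeness of $D$ combined with $[\basic{X},\fvf]=0$.

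For $D_{\basic{X}}\basic{Y}$, the vertical component is again read off from Koszul tested against $\fvf$: only the bracket term $\kkg([\basic{X},\basic{Y}],\fvf)$ is nonzero, and the same computation as above gives a vertical part $\tfrac{i}{2}\Omega(X,Y)\fvf$. For the horizontal component I test against $\basic{Z}$; every derivation of the form $\basic{X}\bigl(\cg(Y,Z)\circ\pi\bigr)$ equals $(X\cg(Y,Z))\circ\pi$, every bracket $[\basic{X},\basic{Y}]$ contributes $\basic{[X,Y]}$ to the horizontal part (the $\fvf$-piece is killed by the pairing with $\basic{Z}$), and the six resulting terms reassemble into the Koszul formula of $\nac$ on the base, yielding $\kkg(\basic{(\nac_XY)},\basic{Z})$.

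The computations are purely bookkeeping once Proposition \ref{brackets} and the formula for $\phi$ are in hand; the only mild subtlety is keeping straight the factors of $i$ and $\sigma$ coming from $\kkg(\fvf,\fvf)=-\sigma$ and from $A$ being $\g g=i\reels$-valued, which is precisely what makes the coefficient $\sigma\tfrac{i}{2}\Omega$ in the definition of $\phi$ the right one to convert the vertical/horizontal mixing term in $D_\fvf \basic{X}$ into $\basic{(\phi X)}$.
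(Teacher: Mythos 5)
Your proposal is correct and follows exactly the route the paper indicates (its proof is simply ``Use the Koszul formula''); the sign bookkeeping via $\kkg(\fvf,\fvf)=-\sigma$, the bracket identities of Proposition \ref{brackets}, and the definition of $\phi$ all check out. No further comment is needed.
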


\begin{proof}
Use the Koszul formula.
\end{proof}

\begin{lem}
$\fvf$ is a Killing vector field s.t. $\kkg(\fvf,\fvf)=-\sigma$.
\end{lem}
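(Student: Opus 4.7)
My plan is to handle the two claims separately, starting with the length calculation, which is immediate, and then establishing that $\fvf$ is Killing via two natural routes.

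For the length, I would just unpack the definition of $\kkg$: since $T\pi(\fvf)=0$, the pull-back piece $\pi^*\cg(\fvf,\fvf)$ vanishes, and since $A(\fvf)=i$ is constant, the contribution from $\sigma A\otimes A$ is $\sigma\cdot i^2 = -\sigma$. So $\kkg(\fvf,\fvf)=-\sigma$, as claimed.

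For the Killing property the cleanest approach is to observe that $\kkg$ is invariant under the $S^1$-action on $P$, so that $\fvf$, as the fundamental vector field of this action, is automatically Killing. The invariance of $\pi^*\cg$ is automatic because $\pi\circ R_g = \pi$, and the invariance of $A\otimes A$ follows from the equivariance condition $R_g^*A = \mathrm{Ad}(g^{-1})A$, which since $G=U(1)$ is abelian reduces to $R_g^*A=A$. Combining these yields $R_g^*\kkg=\kkg$, hence $L_\fvf\kkg=0$.

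If one prefers a direct verification via the Koszul-type formulas of the previous proposition, it suffices to check $\kkg(D_U\fvf,V)+\kkg(U,D_V\fvf)=0$ on pairs of basic and fundamental vectors. On the pair $(\fvf,\fvf)$ it is immediate from $D_\fvf\fvf=0$; on $(\basic{X},\fvf)$ both terms vanish because $\basic{\phi X}\perp\fvf$; on $(\basic{X},\basic{Y})$ one reduces to showing $\cg(\phi X,Y)+\cg(X,\phi Y)=0$, which is the antisymmetry of $\phi$ with respect to $\cg$. This in turn is inherited from the antisymmetry of $\Omega$ via the defining relation $\cg(\phi X,Y)=\sigma\tfrac{i}{2}\Omega(X,Y)$.

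There is really no serious obstacle here; the only point worth stressing in the write-up is the antisymmetry of $\phi$, which is the algebraic fact that makes the Killing equation hold on horizontal pairs and which is the only nontrivial input beyond the $S^1$-invariance of the construction.
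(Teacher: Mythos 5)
Your proof is correct. The paper states this lemma without proof, and both of your routes are the expected ones: the length computation is exactly the intended unpacking of $\kkg=\pi^*\cg+\sigma A\otimes A$ with $A(\fvf)=i$ (using the identification $\g g=i\reels\subset\complexes$, so $i\cdot i=-1$), and the $S^1$-invariance argument ($R_g^*\pi^*\cg=\pi^*\cg$ and $R_g^*A=A$ by abelianness, with $\fvf$ the fundamental field of the action) is the cleanest justification of the Killing property. Your alternative verification via $\kkg(D_U\fvf,V)+\kkg(U,D_V\fvf)=0$ is also sound, since that expression is tensorial in $U,V$ and so may be checked on the spanning set $\{\fvf,\basic{X}\}$, with the antisymmetry of $\phi$ inherited from that of $\Omega$ doing the work on horizontal pairs.
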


\begin{prop}\label{transverseHol}
Let $\mathcal{D}$ be the transverse bundle $\fvf^\perp$.
The transverse connection $D^{\mathcal{D}}$ verifies:
$D^{\mathcal{D}}_{\fvf} \basic{Y}=0$,
$D^{\mathcal{D}}_{\basic{X}}\basic{Y}=\basic{(\nac_{X} Y)}$.

The holonomy group in the point $o$ of the connection $D^{\mathcal{D}}$
verifies $Hol_o^{\mathcal{D}}=\pi^*(Hol_{\pi(o)}^\nac)$.
\end{prop}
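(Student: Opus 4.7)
The plan is to split the statement into two parts: first verify the two formulas for $D^{\mathcal{D}}$ from the already known expressions for $D$ together with the bracket relations of Proposition \ref{brackets}, and then use these formulas to transport the holonomy computation on $(P,\mathcal{D},D^{\mathcal{D}})$ down to $(B,\nac)$ via the tangent map $T\pi$.

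For the first formula, Proposition \ref{brackets} gives $[\basic{Y},\fvf]=0$, hence $D^{\mathcal{D}}_{\fvf}\basic{Y}=p([\fvf,\basic{Y}])=0$ by the very definition of the transverse connection along $\fvf$. For the second, since $\basic{X}\in\mathcal{D}$, the preceding proposition yields $D_{\basic{X}}\basic{Y}=\basic{(\nac_{X}Y)}+\tfrac{i}{2}\Omega(X,Y)\fvf$, and the orthogonal projection $p$ onto $\mathcal{D}=\fvf^{\perp}$ discards the vertical term, leaving exactly $\basic{(\nac_{X}Y)}$.

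For the holonomy equality, the key point is that $\mathcal{D}=\fvf^{\perp}$ coincides with the $A$-horizontal distribution: since $T\pi(\fvf)=0$ and $A(\basic{X})=0$, both summands of $\kkg=\pi^{*}\cg+\sigma A\otimes A$ vanish on $(\basic{X},\fvf)$, so $T\pi$ restricts to a fibrewise isomorphism $\mathcal{D}_{q}\to T_{\pi(q)}B$ for every $q\in P$. Writing any section of $\mathcal{D}$ along a curve $c$ in $P$ as $\basic{\bar{Y}}$, with $\bar{Y}$ a vector field along $\bar{c}=\pi\circ c$, and decomposing $c'=\basic{\bar{c}'}+\alpha\fvf$, the two identities just established combine to
\[D^{\mathcal{D}}_{c'}\basic{\bar{Y}}=\basic{(\nac_{\bar{c}'}\bar{Y})}.\]
Hence $T\pi$ intertwines $D^{\mathcal{D}}$-parallel transport along $c$ with $\nac$-parallel transport along $\bar{c}$, which immediately gives $Hol^{\mathcal{D}}_{o}\subseteq \pi^{*}(Hol^{\nac}_{\pi(o)})$.

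For the converse inclusion, given a loop $\bar{c}$ at $\pi(o)$ in $B$, I would $A$-horizontally lift it to a path in $P$ starting at $o$ and ending at some $o'\in\pi^{-1}(\pi(o))$; since $S^{1}$ is connected, I can close this path up by a vertical segment from $o'$ back to $o$, and by the first formula $D^{\mathcal{D}}_{\fvf}\basic{Y}=0$ the parallel transport along that segment is the identity. Hence every element of $\pi^{*}(Hol^{\nac}_{\pi(o)})$ is realised as the $D^{\mathcal{D}}$-holonomy of some loop at $o$. The only real obstacle I anticipate is cosmetic: smoothing the concatenation of the horizontal lift with the vertical closure, and keeping track of whether $\basic{Y}$ stands for the horizontal lift of a vector field on $B$ or the horizontal lift of a vector along a curve; the substance of the argument is entirely contained in the pair of formulas for $D^{\mathcal{D}}$.
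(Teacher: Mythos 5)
Your proof is correct; the paper in fact states Proposition \ref{transverseHol} without any proof, and your argument is the natural one: the two formulas follow directly from the definition of the transverse connection together with Proposition \ref{brackets} and the Koszul computation of $D$, and the holonomy identity follows because $T\pi$ identifies $\mathcal{D}$ with $\pi^*TB$ and intertwines $D^{\mathcal{D}}$ with $\nac$, with surjectivity supplied by closing horizontal lifts of loops with a vertical segment along which parallel transport is trivial. The only points to tidy are the ones you already flag (piecewise-smooth concatenation, and that sections of $\mathcal{D}$ along a curve are horizontal lifts of vector fields along the projected curve), neither of which affects the substance.
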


\subsection{Boothby-Wang fibration}
Consider the $S^1$-bundle $P$ as in the preceding section.
It is known (see chapter 2 in \cite{blair}, resp. \cite{kob}) that $c_1(P)=[\frac{1}{2\pi i}\Omega]$ is an integral second De Rham cohomology class {\em i.e.} which lies in $H^2(B,\mathbb{Z})_b\subset H^2(B,\mathbb{R})$ ($b$ stands for Betti-part).

Inversely suppose that $\Omega$ is a closed $i\reels$-valued $2$-form such that $[\frac{1}{2\pi i}\Omega]\in H^2(B,\mathbb{Z})_b.$ As the principal $S^1$-bundles over $B$ are classified by
$H^2(B,\mathbb{Z})$, we can consider a bundle $P$ corresponding to $[\frac{1}{2\pi i}\Omega]$. If $B$ is simply connected, then the bundle $P$ is uniquely defined.
There is a (generally not unique) connection form $A$ on the bundle $P$ such that $\Omega$ is the curvature form of $A$.

\subsection{Examples of Kähler base manifolds}
If $(B,g,\Omega)$ is a Kähler manifold, the condition $[\frac{1}{2\pi i}\Omega]\in H^2(B,\mathbb{Z})_b$ means, by the Kodaira embedding theorem, that $B$ is projective, {\em i.e.} holomorphically embeds into some projective space $\complexes P^n$.
The construction of section \ref{KKC} gives then examples of Sasakian manifolds (see \ref{sasakian_examples}) with holonomy of the transverse bundle given by the holonomy of $(B,g)$: By a theorem of Y. Hatakeyama, the total space of the principal $S^1$-bundle is normal contact if and only if the base manifold is Kähler and projective(see \cite{hatakeyama}).

By the Berger theorem it is known that the list of holonomy algebras of irreducible, non locally-symmetric Kähler manifolds is restricted to $\g u(n)$, $\g s \g u(n)$ (Calabi-Yau), $\g s \g p(n)$ (hyperKähler).

\subsubsection{Projective Kähler symmetric spaces} Irreducible Kähler symmetric spaces are automatically Einstein. Furthermore if they are compact and simply-connected, they are projective(see \cite{besse}, 8.89, 8.2, 8.99). See also (\cite{besse}, 10.K) for the list of compact irreducible Kähler symmetric spaces.
\subsubsection{Projective base manifold with full $U(n)$-holonomy}
Consider $(X,g)$ a compact homogeneous Kähler manifold. If $X$ is simply-connected, it is known (see \cite{besse}, 8.2, 8.98-99) that $X$ admits a Kähler-Einstein metric $g$ with positive scalar curvature, and $X$ is projective.
If $(X,g)$ is not a symmetric space and irreducible, it admits necessarily then full $U(n)$-holonomy as Kähler-manifolds whose holonomy is included in $SU(n)$ are Ricci-flat.
\subsubsection{Projective Calabi-Yau base manifolds} It is known that manifolds with full $SU(n)$-holonomy are automatically projective as soon as their complex dimension is bigger than $3$.
\subsubsection{Projective hyperKähler base manifolds} Very little examples are known by today of projective hyperKähler manifolds. One series of such can be obtained in the following way: Take $X$ a projective K3-surface {\em e.g.} the Fermat quartic:
$$\{(x_0:x_1:x_2:x_3) | x_0^4+x_1^4+x_2^4+x_3^4=0 \}\subset \complexes P^3$$
$X$ itself and the Hilbert scheme $Hilb^n(X)$ of dimension $2n$ are then projective and hyperKähler.
See (\cite{ghj}, 21.1-2 and 26, and \cite{besse} 14.C) for more information about these examples and projective hyperKähler manifolds in general.

\section{Kaluza-Klein metrics with a recurrent vector field}\label{kkmrvf}

\subsection{Recurrent vector fields}

We say that the non-vanishing vector field $\rec$ on $P$ is {\em recurrent} if there is a scalar-valued $1$-form $\omega$ on $P$ such that $D \rec=\omega\otimes\rec$.

Note that if $\rec$ is a recurrent vector field, then $\langle\rec\rangle$ and $\rec^\perp$ are parallel distributions. In case the connection is torsion-free a parallel distribution is automatically integrable and tangent to a foliation of the manifold. In case $\rec$ is light-like, $\langle\rec\rangle$ is a sub-distribution of $\rec^\perp$ and the leaves corresponding to $\langle\rec\rangle$ are contained in the leaves corresponding to $\rec^\perp$.

\subsection{Screen bundle}
The quotient-bundle $\mathcal{S}:=\rec^\perp /\langle\rec\rangle$ is called the {\em screen bundle} corresponding to the recurrent vector field $\rec$. Note $q$ the corresponding canonical projection $\rec^\perp \to \mathcal{S}$. $\mathcal{S}$ carries a natural covariant derivative $D^{\mathcal{S}}$:
$$D^{\mathcal{S}}_X (qY):=q(D_X Y).$$
\begin{lem}
$D^{\mathcal{S}}$ is well defined.
\end{lem}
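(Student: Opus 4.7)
The plan is to verify two things: first, that the formula $D^{\mathcal{S}}_X(qY) := q(D_X Y)$ makes sense, i.e.\ the right-hand side actually lies in $\mathcal{S}$ and not merely in $TP/\langle\rec\rangle$; and second, that it is independent of the choice of representative $Y$ of the class $qY$.

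For the first point, I would invoke the observation made just before the statement: since $\rec$ is recurrent with $D\rec = \omega\otimes\rec$, the distribution $\rec^\perp$ is parallel. Concretely, for any vector field $X$ on $P$ and any section $Y$ of $\rec^\perp$, the metric compatibility of $D$ together with $\kkg(Y,\rec)=0$ gives
\[
X\cdot\kkg(Y,\rec) = \kkg(D_X Y,\rec) + \kkg(Y,D_X\rec) = \kkg(D_X Y,\rec) + \omega(X)\kkg(Y,\rec) = 0,
\]
so $\kkg(D_X Y,\rec)=0$, i.e.\ $D_X Y$ is again a section of $\rec^\perp$. Thus $q(D_X Y)$ is defined.

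For the second point, a different representative of the class $qY$ has the form $Y' = Y + f\rec$ for some smooth function $f$ on $P$. Then
\[
D_X Y' = D_X Y + X(f)\,\rec + f\,D_X\rec = D_X Y + \bigl(X(f)+f\,\omega(X)\bigr)\rec,
\]
and applying $q$ kills the $\rec$-term, so $q(D_X Y') = q(D_X Y)$. This shows the definition descends to the quotient.

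The remaining checks---$\mathbb{R}$-linearity and tensoriality in $X$, and the Leibniz rule $D^{\mathcal{S}}_X(h\,qY) = X(h)\,qY + h\,D^{\mathcal{S}}_X(qY)$ for smooth functions $h$---follow immediately from the corresponding properties of $D$ and the linearity of $q$. No step is really a serious obstacle; the only substantive input is the recurrence relation $D\rec=\omega\otimes\rec$, which is precisely what guarantees that $\rec^\perp$ is $D$-parallel and that the ambiguity $f\rec$ stays parallel up to a function multiple of $\rec$.
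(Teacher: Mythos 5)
Your proof is correct and is precisely the standard argument that the paper leaves implicit (the lemma is stated there without any proof): metric compatibility plus the recurrence $D\rec=\omega\otimes\rec$ give that $\rec^\perp$ is $D$-parallel, so $q(D_XY)$ makes sense, and the same recurrence shows the ambiguity $f\rec$ in the representative is carried into $\langle\rec\rangle$ and killed by $q$. Nothing is missing.
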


$D^{\mathcal{S}}$ restricts to any leaf $\mathcal{L}$ of the foliation $\mathcal{F}$ determined by $\rec^\perp$.

\subsection{Kaluza-Klein metrics with a recurrent vector field}

\begin{prop}\label{recurrentD}
\begin{enumerate}[(i)]
Given a pseudo-Riemannian manifold $(B,\cg)$, and let $\sigma$ be $\pm 1$.

\item Let $(P,\pi,B)$ a principal $S^1$-bundle over the manifold $B$ and connection form $A$.
Let $\kkg:=\pi^*\cg + \sigma A\otimes A$ be a metric on $P$.

Suppose there is a light-like recurrent vector field $\rec$ on $P$ of the form $f\basic{\rvf} + h \fvf$, where $\rvf$ is a vector field on $B$ such that $\cg(\rvf,\rvf)=\sigma$ and $f$ and $h$ are in $C^\infty(P)$.

Then $\varepsilon=\sign(fh)$ is locally constant $\pm 1$, and we have: $\rvf$ is a Killing vector field,
$D(\basic{\rvf}+\varepsilon \fvf)=0$, $\phi \rvf=0$, $\Omega(\rvf,\cdot)=0$, $\phi = -\varepsilon \nac \rvf$,

Furthermore $\Omega=-i\sigma\varepsilon d\eta$ for the $1$-form $\eta:=\cg(\rvf,\cdot)$. The connection form $A_0:=A+i\sigma\varepsilon\pi^*\eta$ on the bundle $\pi:P\to B$ is flat.

\item Reciprocally suppose $(B,\cg)$ admits a Killing vector field $\rvf$ such that $\cg(\rvf,\rvf)=\sigma$.
 Suppose the principal $S^1$-bundle $\pi:P\to B$ admits a flat connection $A_0$. Let $\kkg$ be the metric $\pi^*\cg + \sigma A\otimes A$ for the connection form $A:=A_0-i\sigma\varepsilon\pi^*\eta$ with $\eta:=\cg(\rvf,\cdot)$ and $\varepsilon=\pm 1$.
$P$ admits then the parallel light-like vector field $\basic{\rvf}+\varepsilon \fvf$ for the Levi-Civita connection associated to $\kkg$.

\end{enumerate}

\end{prop}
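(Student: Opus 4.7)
The plan is to normalize $\rec$ and then apply the explicit Koszul-type formulas for $D$ recorded earlier. From $\kkg(\basic\rvf,\basic\rvf) = \sigma$, $\kkg(\fvf,\fvf) = -\sigma$ and $\kkg(\basic\rvf,\fvf) = 0$, the light-like condition $\kkg(\rec,\rec) = 0$ reduces to $\sigma(f^2 - h^2) = 0$; hence $|f| = |h|$ and $\varepsilon := \sign(fh) \in \{\pm 1\}$ is locally constant, so $\rec = f\,N$ with $N := \basic\rvf + \varepsilon\fvf$ non-vanishing. The recurrence of $\rec = fN$ is then equivalent to the recurrence of $N$ with the shifted $1$-form $\omega - d\ln|f|$, so I may work with $N$ from here on.

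The heart of the argument is that on this normalized field the recurrence condition is rigid enough to collapse to parallelism. Along $\fvf$: $D_\fvf N = \basic{\phi\rvf}$, which is purely horizontal; comparing with $\omega'(\fvf) N$ and splitting into horizontal and vertical parts forces $\omega'(\fvf) = 0$ and $\phi\rvf = 0$, the latter giving $\Omega(\rvf,\cdot) = 0$ via the definition $\cg(\phi X,Y)=\sigma\frac{i}{2}\Omega(X,Y)$. Along $\basic X$: $D_{\basic X} N = \basic{(\nac_X\rvf + \varepsilon\phi X)} + \frac{i}{2}\Omega(X,\rvf)\fvf$; the vertical component has just been shown to vanish, so $\omega'(\basic X) = 0$, while the horizontal component gives $\phi = -\varepsilon\nac\rvf$. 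Hence $DN = 0$.

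The remaining assertions of (i) follow by formal manipulation. Skew-symmetry of $\phi$ (inherited from $\Omega$ being a $2$-form) combined with $\phi = -\varepsilon\nac\rvf$ makes $\nac\rvf$ skew, so $\rvf$ is Killing. Under this Killing condition $d\eta(X,Y) = 2\cg(\nac_X\rvf, Y)$, and substituting back into the definition of $\phi$ yields the relation $\Omega = -i\sigma\varepsilon\, d\eta$. Since $S^1$ is abelian, the curvature of $A_0 := A + i\sigma\varepsilon\pi^*\eta$ is then $\Omega + i\sigma\varepsilon\, d\eta = 0$, so $A_0$ is flat.

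For the converse (ii) I would run the same mechanism backwards. With $A_0$ flat, $A = A_0 - i\sigma\varepsilon\pi^*\eta$ has curvature $-i\sigma\varepsilon\, d\eta$, which by the Killing hypothesis rewrites as $\phi\rvf = 0$ together with $\phi = -\varepsilon\nac\rvf$. Plugging these identities into the same $D$-formulas gives $D_\fvf(\basic\rvf + \varepsilon\fvf) = 0$ and $D_{\basic X}(\basic\rvf + \varepsilon\fvf) = 0$, so $\basic\rvf + \varepsilon\fvf$ is parallel and, being the sum of $\basic\rvf$ and $\varepsilon\fvf$, light-like. The main obstacle throughout is simply the bookkeeping of the signs $\sigma$, $\varepsilon$ and the imaginary factor from $\g g \simeq i\reels$; everything else is a mechanical application of the $D$-formulas and the definition of $\phi$.
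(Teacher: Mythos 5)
Your argument is correct and follows essentially the same route as the paper: normalize $\rec=fN$ with $N=\basic{\rvf}+\varepsilon\fvf$ (equivalently, absorb $d\ln|f|$ into the recurrence form), then split $D_{\fvf}N$ and $D_{\basic{X}}N$ into horizontal and vertical parts via the explicit Koszul-derived formulas for $D$, forcing $\omega'=0$, $\phi\rvf=0$ and $\phi=-\varepsilon\nac\rvf$, whence the Killing property, $\Omega=-i\sigma\varepsilon\,d\eta$, flatness of $A_0$, and the reverse implication by running the same identities backwards. The one blemish is a sign at the step where you write $d\eta(X,Y)=2\cg(\nac_X\rvf,Y)$ and then assert $\Omega=-i\sigma\varepsilon\,d\eta$ (that formula actually yields $+i\sigma\varepsilon\,d\eta$); the paper's own display reads $d\eta(X,Y)=2\cg(-\nac_X\rvf,Y)$, so this is a sign-convention slip shared with the source rather than a genuine gap.
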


\begin{description}
\item[Notation]
We will note $\parvf$ the parallel vector field $\basic{\rvf}+\varepsilon \fvf$, and $\cpvf$ the complementary field $\basic{\rvf}-\varepsilon \fvf$.
\end{description}
\begin{proof}
\begin{enumerate}[1)]
\item The condition that $\rec$ is light-like writes $\kkg(\rec,\rec)=0$, giving $f^2\cdot \sigma-h^2 \cdot \sigma=0$, say $f=\varepsilon h$ with $\varepsilon=\pm 1$.
$$\rec=f\cdot(\basic{\rvf}+\varepsilon \fvf)$$
As a consequence $f$ is non vanishing.

\item $D \rec=\omega \otimes \rec$,

\begin{enumerate}[a)]
\item $D_{\fvf} \rec=\omega(\fvf)\rec$

$\begin{matrix}
\Rightarrow & \fvf(f)(\basic{\rvf}+\varepsilon \fvf)+f\cdot(D_{\fvf}\basic{\rvf}+\varepsilon D_{\fvf}\fvf) = \omega(\fvf)f\cdot(\basic{\rvf}+\varepsilon \fvf)\\
\Rightarrow & \fvf(f)(\basic{\rvf}+\varepsilon \fvf)+f\cdot(\phi \rvf) = \omega(\fvf)f\cdot(\basic{\rvf}+\varepsilon \fvf)\\
\end{matrix}$

From this follow the two equations:
\begin{eqnarray} \fvf(f) \fvf & = & \omega(\fvf) f \fvf\label{veq}\\ \fvf(f)\basic{\rvf}+f\cdot(\phi \rvf) & = & \omega(\fvf)f\basic{\rvf}\label{heq}
\end{eqnarray}

Equation~\ref{veq} gives \begin{equation}\omega(\fvf)=\fvf(\ln f)\label{lnf}\end{equation}
From equation~\ref{heq} follows \begin{equation}\phi \rvf=0\label{lnf2}\end{equation} otherwise stated \begin{equation}\Omega(\rvf,\cdot)=0\label{lnf2}\end{equation}

\item $D_{\basic{X}} \rec=\omega(\basic{X})\rec$
\begin{eqnarray*}\basic{X}(f)(\basic{\rvf}+\varepsilon \fvf)+f D_{\basic{X}}\basic{\rvf} + \varepsilon f D_{\basic{X}}\fvf & = & \omega(\basic{X})f \basic{\rvf}+\varepsilon \omega(\basic{X}) f \fvf\\
\end{eqnarray*}

From this follow the two equations:
\begin{eqnarray*}
\basic{X}(f)\fvf & = & \omega(\basic{X}) f \fvf\label{xlnf}\\
\basic{X}(f)\basic{\rvf}+f \cdot \basic{(\nac_{X}\rvf)}+\varepsilon f \cdot\basic{(\phi X)} & = & \omega(\basic{X})f \basic{\rvf}\label{xcurv}
\end{eqnarray*}

then
\begin{eqnarray}
\basic{X}(f) & = & \omega(\basic{X}) f \label{xlnf2}\\
\basic{(\nac_{X}\rvf)} & = & -\varepsilon \basic{(\phi X)}\label{xcurv2}
\end{eqnarray}

The two equations~\ref{lnf} and \ref{xlnf2} give
\begin{equation}
\omega = d(\ln f) \label{dlnf}\end{equation}
From equation \ref{dlnf} follows in particular that the vector field $\rec_0 := \basic{\rvf}+\varepsilon \fvf$ is parallel.

Equation \ref{xcurv2} rewrites:
\begin{equation}
\phi X = -\varepsilon \nac_{X}\rvf\label{xcurv3}
\end{equation}
Using the definition of $\phi$ and the antisymmetry of $\Omega$, one sees from this equation that $\rvf$ is a Killing vector field.

The relation $\Omega=-i\varepsilon \sigma d\eta$ follows from the fact: $d\eta(X,Y)=X(\eta(Y))-Y(\eta(X))-\eta([X,Y])=2\cg(-\nac_X \rvf,Y)$, true if $\rvf$ is a Killing vector field, and consequently $d\eta(X,Y)=2\varepsilon \cg(\phi X, Y)=\sigma \varepsilon i \Omega(X,Y)$
\end{enumerate}
\end{enumerate}
The reverse implication is immediate after the preceding discussion.
\end{proof}

Note that under the conditions of the proposition $\parvf^\perp$ is exactly the horizontal distribution of the connection form $A_0$.

\begin{prop}
Under the conditions of proposition \ref{recurrentD}, we have:
$\mathcal{L}_{\rvf} \eta=0$, $\mathcal{L}_{\rvf} d \eta=0$,  $\mathcal{L}_{\rvf} \phi=0$, $\nac_{\rvf}\phi=0$.
\end{prop}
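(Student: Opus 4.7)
My plan is to leverage the three structural facts assembled in Proposition~\ref{recurrentD}: that $\rvf$ is Killing with constant norm $\sigma$, that $\phi\rvf=0$, and the algebraic identification $\phi=-\varepsilon\,\nac\rvf$ as endomorphisms of $TB$. Combining the last two already gives $\nac_\rvf\rvf=0$, which I will use repeatedly.

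First, for $\mathcal{L}_\rvf\eta=0$, I would apply Cartan's formula $\mathcal{L}_\rvf\eta=d(\iota_\rvf\eta)+\iota_\rvf d\eta$. The first summand vanishes because $\iota_\rvf\eta=\cg(\rvf,\rvf)=\sigma$ is constant. For the second, the Killing identity $d\eta(X,Y)=2\cg(-\nac_X\rvf,Y)$ used in the proof of Proposition~\ref{recurrentD} gives $(\iota_\rvf d\eta)(X)=d\eta(\rvf,X)=-2\cg(\nac_\rvf\rvf,X)=0$. Then $\mathcal{L}_\rvf d\eta=0$ is immediate from $\mathcal{L}_\rvf d\eta=d(\mathcal{L}_\rvf\eta)$.

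Next, for $\mathcal{L}_\rvf\phi=0$, my idea is that $\phi$ is determined by $\mathcal{L}_\rvf$-invariant data. The defining relation $\cg(\phi X,Y)=\sigma\tfrac{i}{2}\Omega(X,Y)$, together with $\Omega=-i\sigma\varepsilon\, d\eta$ from Proposition~\ref{recurrentD}, yields $2\cg(\phi X,Y)=-\varepsilon\, d\eta(X,Y)$. Both $\cg$ (by the Killing property of $\rvf$) and $d\eta$ (by the previous step) are $\mathcal{L}_\rvf$-invariant, so the non-degeneracy of $\cg$ forces $\mathcal{L}_\rvf\phi=0$.

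Finally, to deduce $\nac_\rvf\phi=0$, I would invoke the general comparison between Lie and covariant derivatives of a $(1,1)$-tensor. Expanding $\mathcal{L}_\rvf(\phi Y)-\phi(\mathcal{L}_\rvf Y)$ via the torsion-free formula $\mathcal{L}_\rvf Z=\nac_\rvf Z-\nac_Z\rvf$ produces
\[
(\mathcal{L}_\rvf\phi)(Y)-(\nac_\rvf\phi)(Y)\;=\;\phi(\nac_Y\rvf)-\nac_{\phi Y}\rvf\;=\;[\phi,\nac\rvf](Y),
\]
where $\nac\rvf$ denotes the endomorphism $Y\mapsto\nac_Y\rvf$. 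Since $\nac\rvf=-\varepsilon\,\phi$, this commutator vanishes, and the previous step gives $\nac_\rvf\phi=\mathcal{L}_\rvf\phi=0$. I do not expect any genuine obstacle; the only care required is the sign-bookkeeping already performed in Proposition~\ref{recurrentD}.
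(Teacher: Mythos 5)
Your proposal is correct and follows essentially the same route as the paper: Cartan's formula together with $\eta(\rvf)=\sigma$ and $d\eta(\rvf,\cdot)=0$ for the first two identities, the $\mathcal{L}_{\rvf}$-invariance of $\cg$ and $d\eta$ for $\mathcal{L}_{\rvf}\phi=0$, and the Lie-versus-covariant-derivative comparison combined with $\nac\rvf=-\varepsilon\phi$ (so the relevant commutator is $[\phi,\phi]=0$) for $\nac_{\rvf}\phi=0$. No gaps.
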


\begin{proof}
Since $\mathcal{L}_{\rvf}=d\circ\iota_{\rvf} + \iota_{\rvf}\circ d$, the two first statements follow from $\eta(\rvf)=\sigma$ and $d\eta(\rvf,\cdot)=0$.

For the third note
$$0=-\frac{\sigma \varepsilon}{2}(\mathcal{L}_{\rvf} d \eta)(X,Y)=\rvf \cg(\phi X,Y)-\cg(\phi [\rvf,X], Y) - \cg(\phi X, [\rvf,Y])$$
$$=(\mathcal{L}_{\rvf} \cg)(\phi X,Y)+\cg((\mathcal{L}_{\rvf} \phi)X,Y).$$
From this follows $\mathcal{L}_{\rvf} \phi=0$.

In particular it implies: $\nac_{\rvf}(\phi X)- \nac_{\phi X} \rvf = \phi(\nac_{\rvf} X - \nac_{X} \rvf)$ and so $(\nac_{\rvf}\phi)X = \nac_{\phi X}\rvf-\phi(\nac_X \rvf)=-\varepsilon \phi^2 X + \varepsilon \phi^2 X = 0$, proving the last statement.
\end{proof}

\begin{lem}
Under the conditions of proposition \ref{recurrentD}, we have:
$(\mathcal{L}_{\phi X} \eta)(Y)=d\eta(\phi X,Y)$.
\end{lem}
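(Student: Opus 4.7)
The statement is a short consequence of Cartan's magic formula combined with the vanishing $\phi\rvf=0$ already established in proposition~\ref{recurrentD}. I would proceed as follows.

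First I apply Cartan's identity to write
\[
\mathcal{L}_{\phi X}\eta \;=\; d\,\iota_{\phi X}\eta + \iota_{\phi X}\,d\eta,
\]
so that $(\mathcal{L}_{\phi X}\eta)(Y) = Y\!\bigl(\eta(\phi X)\bigr) + d\eta(\phi X, Y)$. Thus it suffices to show that the function $\eta(\phi X)=\cg(\rvf,\phi X)$ is identically zero.

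Next I would use the skew-symmetry of $\phi$ with respect to $\cg$. This is immediate from the definition $\cg(\phi X,Y)=\sigma\tfrac{i}{2}\Omega(X,Y)$ and the antisymmetry of $\Omega$, which yields $\cg(\phi X,Y)=-\cg(X,\phi Y)$. Hence
\[
\cg(\rvf,\phi X) \;=\; \cg(\phi X,\rvf) \;=\; -\cg(X,\phi\rvf) \;=\; 0,
\]
since $\phi\rvf=0$ by proposition~\ref{recurrentD}.

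It follows that $\iota_{\phi X}\eta=0$ as a function on $P$, hence $d\,\iota_{\phi X}\eta=0$, and Cartan's formula reduces to the claimed identity $(\mathcal{L}_{\phi X}\eta)(Y)=d\eta(\phi X,Y)$. There is no real obstacle in this argument; the only thing to be careful about is that $\phi X$ is to be interpreted as a genuine vector field on $P$ (via horizontal lift, or equivalently on $B$), so that Cartan's formula applies without modification, and the conclusion $\eta(\phi X)\equiv 0$ is a pointwise identity that propagates globally.
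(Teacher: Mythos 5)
Your proof is correct and follows essentially the same route as the paper, which simply notes that the identity ``follows from the definition of $\mathcal{L}$ and from $\eta(\phi X)=0$''. You have merely filled in the details the paper leaves implicit, namely Cartan's formula and the verification $\eta(\phi X)=\cg(\phi X,\rvf)=-\cg(X,\phi\rvf)=0$ via the skew-symmetry of $\phi$ and the relation $\phi\rvf=0$ from Proposition \ref{recurrentD}.
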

\begin{proof}
This follows from the definition of $\mathcal{L}$ and from $\eta(\phi X)=0$.
\end{proof}

\subsection{Transverse connection on $\rvf^\perp$ in case $D$ admits a recurrent vector field}

Note $\mathcal{D}=\mathcal{D}_{\rvf}$ the sub-bundle $\rvf^\perp$ and let $\nad$ be the transverse connection corresponding to $\rvf$ on the bundle $\mathcal{D}$.

\begin{prop} Let $D$ be as in proposition \ref{recurrentD}.
Let $X$ and $Y$ be sections of $\mathcal{D}$.
\begin{eqnarray*}
D \parvf& = & 0,\\
D_{\parvf}\basic{Y} & = & \basic{(\nad_{\rvf}Y)},\\
D_{\parvf}\cpvf & = & 0,\\
D_{\basic{X}}\basic{Y} & = & \basic{(\nad_{X}Y)}+\varepsilon\frac{i}{2}\Omega(X,Y)\parvf,\\
D_{\basic{X}}\cpvf & = & - 2 \varepsilon \basic{(\phi X)},\\
D_{\fvf}\basic{Y} & = & \basic{(\phi Y)},\\
D_{\fvf}\cpvf & = & 0.\\
\end{eqnarray*}
\end{prop}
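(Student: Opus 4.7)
The plan is to reduce each of the seven identities to the Koszul-type formulas for $D$ proved in Section \ref{KKC} ($D_\fvf\fvf = 0$, $D_\fvf\basic X = D_{\basic X}\fvf = \basic{(\phi X)}$, and $D_{\basic X}\basic Y = \basic{(\nac_X Y)} + \tfrac{i}{2}\Omega(X,Y)\fvf$), combined with the structural identities gathered in Proposition \ref{recurrentD}: $D\parvf = 0$, $\phi\rvf = 0$, $\Omega(\rvf,\cdot) = 0$, and $\nac_X\rvf = -\varepsilon\phi X$. The first identity $D\parvf = 0$ is already part of that proposition.

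The only non-routine input I need is to relate the Levi-Civita covariant derivative $\nac_X Y$ on $B$ (for $X, Y$ sections of $\mathcal{D}$) to the transverse connection $\nad_X Y = p(\nac_X Y)$. Since $\cg(\rvf,\rvf) = \sigma$, any vector $V \in TB$ decomposes as $V = p(V) + \sigma\cg(V,\rvf)\rvf$, and I would compute
$$\cg(\nac_X Y, \rvf) = -\cg(Y, \nac_X\rvf) = \varepsilon\cg(Y,\phi X) = \varepsilon\sigma\tfrac{i}{2}\Omega(X,Y),$$
so that $\nac_X Y = \nad_X Y + \varepsilon\tfrac{i}{2}\Omega(X,Y)\,\rvf$. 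Lifting horizontally and adding the vertical term $\tfrac{i}{2}\Omega(X,Y)\fvf$ from the Koszul formula then yields $D_{\basic X}\basic Y = \basic{(\nad_X Y)} + \tfrac{i}{2}\Omega(X,Y)(\varepsilon\basic\rvf + \fvf) = \basic{(\nad_X Y)} + \varepsilon\tfrac{i}{2}\Omega(X,Y)\parvf$, giving the fourth identity.

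For $D_\parvf\basic Y$, expanding $\parvf = \basic\rvf + \varepsilon\fvf$ and using $\Omega(\rvf,\cdot) = 0$ gives $D_\parvf\basic Y = \basic{(\nac_\rvf Y)} + \varepsilon\basic{(\phi Y)}$. On $B$, torsion-freeness of $\nac$ combined with $\nac_Y\rvf = -\varepsilon\phi Y$ yields $[\rvf,Y] = \nac_\rvf Y + \varepsilon\phi Y$; both summands lie in $\mathcal{D}$ (indeed $\phi Y \perp \rvf$ by $\phi\rvf = 0$ and antisymmetry of $\phi$, while $\cg(\nac_\rvf Y,\rvf) = -\cg(Y,\nac_\rvf\rvf) = 0$ since $\nac_\rvf\rvf = -\varepsilon\phi\rvf = 0$), so $\nad_\rvf Y = p([\rvf,Y]) = \nac_\rvf Y + \varepsilon\phi Y$, which matches the expansion.

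The remaining identities $D_\parvf\cpvf = 0$, $D_{\basic X}\cpvf = -2\varepsilon\basic{(\phi X)}$, $D_\fvf\basic Y = \basic{(\phi Y)}$, and $D_\fvf\cpvf = 0$ come out of direct substitution of the Koszul formulas together with $\nac_X\rvf = -\varepsilon\phi X$, $\phi\rvf = 0$, and $D_\fvf\fvf = 0$ (for instance $D_{\basic X}\cpvf = \basic{(\nac_X\rvf)} + \tfrac{i}{2}\Omega(X,\rvf)\fvf - \varepsilon\basic{(\phi X)} = -2\varepsilon\basic{(\phi X)}$). The main obstacle, such as it is, is bookkeeping of the $\varepsilon$ and $\sigma$ signs in the $\mathcal{D}$-versus-$\langle\rvf\rangle$ decomposition above; once that decomposition is in place everything else is substitution.
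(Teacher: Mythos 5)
Your proof is correct: every sign works out (in particular $\cg(\nac_X Y,\rvf)=\varepsilon\sigma\frac{i}{2}\Omega(X,Y)$ and hence $\nac_X Y=\nad_X Y+\varepsilon\frac{i}{2}\Omega(X,Y)\rvf$, and the verification that $\nac_\rvf Y+\varepsilon\phi Y$ lies in $\mathcal{D}$ before identifying it with $\nad_\rvf Y=p([\rvf,Y])$ is exactly the point that needs checking). The paper omits the proof, and your direct substitution into the Koszul-derived formulas together with the identities of Proposition \ref{recurrentD} is precisely the intended argument.
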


\begin{prop}\label{curvature} Let $D$ be as in proposition \ref{recurrentD}. Let $X, Y, Z$ be sections of $\mathcal{D}$.
The curvature of the covariant derivative $D$ expresses by:
\begin{eqnarray*}
R(\cdot,\cdot)\parvf& = & 0,\\
R(\parvf,\basic{X})\basic{Y} & = & \basic{(\curvd(\rvf,X)Y)},\\
R(\parvf,\basic{X})\cpvf& = & 0,\\
R(\basic{X},\basic{Y})\basic{Z} & = & \basic{(\curvd(X,Y)Z)}+\sigma \varepsilon\cg((\nad_X \phi)Y-(\nad_Y \phi)X,Z)\parvf,\\
R(\basic{X},\basic{Y})\cpvf& = & -2\varepsilon\basic{((\nad_X\phi)Y-(\nad_Y\phi)X)},\\
R(\parvf,\fvf)& = & 0,\\
R(\basic{X},\fvf)\basic{Y} & = & \basic{((\nad_X\phi)Y)}+\varepsilon\sigma\cg(\phi X,\phi Y)\parvf,\\
R(\basic{X},\fvf)\cpvf & = & 2 \varepsilon\basic{(\phi^2 X)}.\\
\end{eqnarray*}
\end{prop}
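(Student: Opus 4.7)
The plan is to verify each line directly from $R(U,V)W = D_U D_V W - D_V D_U W - D_{[U,V]} W$, substituting the formulas of the preceding proposition for $D$ together with the brackets $[\basic{X},\basic{Y}] = \basic{[X,Y]} + i\Omega(X,Y)\fvf$ and $[\basic{X},\fvf] = 0$ of Proposition~\ref{brackets}. Combined with $\Omega(\rvf,\cdot) = 0$ these yield $[\basic{X},\parvf] = [\basic{X},\cpvf] = \basic{[X,\rvf]}$ and $[\parvf,\fvf] = 0$, which exhaust the bracket computations needed. The first line, $R(\cdot,\cdot)\parvf = 0$, is then immediate from $D\parvf = 0$.

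For $R(\basic{X},\basic{Y})\basic{Z}$ I would split the result into a horizontal part along $\basic{\mathcal{D}}$ and a longitudinal part along $\parvf$. The horizontal part assembles into
\[\basic{\nad_X\nad_Y Z - \nad_Y\nad_X Z - \nad_{[X,Y]} Z} = \basic{\curvd(X,Y)Z};\]
here the $i\Omega(X,Y)\fvf$ piece of $[\basic{X},\basic{Y}]$ combines with $D_\fvf\basic{Z} = \basic{\phi Z}$ to recover the $\rvf$-component of $\nad_{[X,Y]}$, using $\nad_\rvf Z = \nac_\rvf Z + \varepsilon\phi Z$ and the identity $\cg([X,Y],\rvf) = \sigma\varepsilon i\Omega(X,Y)$ (which itself follows from $\nac_X\rvf = -\varepsilon\phi X$). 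The longitudinal part collects derivatives of $\Omega$; converting $\Omega$ into $\phi$ via $\cg(\phi\cdot,\cdot) = \sigma\frac{i}{2}\Omega(\cdot,\cdot)$ and exploiting metricity and torsion-freeness of $\nad$ on $\mathcal{D}$, it collapses to $\sigma\varepsilon\,\cg((\nad_X\phi)Y - (\nad_Y\phi)X, Z)\parvf$. The same combination $(\nad_X\phi)Y - (\nad_Y\phi)X$ reappears in $R(\basic{X},\basic{Y})\cpvf$, multiplied by $-2\varepsilon$ owing to $D_{\basic{X}}\cpvf = -2\varepsilon\basic{\phi X}$.

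The remaining lines run in the same way. For $R(\parvf,\basic{X})\basic{Y}$ the horizontal part is $\basic{\curvd(\rvf,X)Y}$ (the bracket $[\rvf,X]$ lies in $\mathcal{D}$ because $\rvf$ is Killing), while the longitudinal part reduces to $\sigma\varepsilon\,\cg((\nad_\rvf\phi)X,Y)\parvf$ and vanishes by $\nac_\rvf\phi = 0$ established in the preceding proposition (here one uses that $\nac_\rvf$ and $\nad_\rvf$ agree on sections of $\mathcal{D}$, which again follows from $\rvf$ being Killing). The $\fvf$-lines come from applying $D_\fvf\basic{Y} = \basic{\phi Y}$ and $D_\fvf\cpvf = 0$: the horizontal outcome of $R(\basic{X},\fvf)\basic{Y}$ is $\basic{(\nad_X\phi)Y}$, and the coefficient $\sigma\varepsilon\,\cg(\phi X,\phi Y)\parvf$ emerges from the scalar term $\varepsilon\frac{i}{2}\Omega(X,\phi Y)\parvf$ after the same conversion. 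The bookkeeping point to watch throughout is the passage from $\nac$-derivatives of $\phi$ (present in $D_{\basic{X}}\basic{Y}$) to $\nad$-derivatives of $\phi$ in the answer: since $\phi\rvf = 0$ and $\nac_X Y - \nad_X Y$ is a multiple of $\rvf$, this difference disappears after pairing with a $\mathcal{D}$-section under $\cg$ or after applying $\phi$, so no extra terms survive.
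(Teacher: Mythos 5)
Your computation is correct, and since the paper states this proposition without proof, the direct verification of $R(U,V)W=D_UD_VW-D_VD_UW-D_{[U,V]}W$ from the preceding proposition together with the brackets of Proposition~\ref{brackets} and the identity $\cg([X,Y],\rvf)=\sigma\varepsilon i\Omega(X,Y)$ is exactly the intended argument; all eight lines check out, including the cancellation of the $\parvf$-component in $R(\basic{X},\basic{Y})\cpvf$.

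One justification is misstated, though your conclusion survives it. You claim that $\nac_\rvf$ and $\nad_\rvf$ agree on sections of $\mathcal{D}$; in fact, from $\nad_\rvf Z=p([\rvf,Z])$ and $\nac_Z\rvf=-\varepsilon\phi Z$ one gets $\nad_\rvf Z=\nac_\rvf Z+\varepsilon\phi Z$ (the identity you yourself use correctly two paragraphs earlier --- note $\nac_\rvf Z\in\Gamma(\mathcal{D})$ because $\nac_\rvf\rvf=-\varepsilon\phi\rvf=0$), so the two operators differ by $\varepsilon\phi$. What saves the step is that this discrepancy commutes with $\phi$: $(\nad_\rvf\phi)X=\nac_\rvf(\phi X)+\varepsilon\phi^2X-\phi\nac_\rvf X-\varepsilon\phi^2X=(\nac_\rvf\phi)X=0$, which is precisely what the vanishing of the longitudinal part of $R(\parvf,\basic{X})\basic{Y}$ (and of $R(\parvf,\basic{X})\cpvf$ and $R(\parvf,\fvf)$) requires.
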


Let $\mathcal{S}:=\parvf^\perp /\langle\parvf\rangle$ be the screen bundle corresponding to $\parvf$. Note $q$ the corresponding canonical projection.

As we saw before the covariant derivative $D^{\mathcal{S}}$ restricts to any leaf $\mathcal{L}$ of the foliation $\mathcal{F}$ associated to $\mathcal{D}$. Note $\g h \g o \g l^{\mathcal{L},\mathcal{S}}_o$ its holonomy algebra.

Call $\iota$ the $C^\infty(B)$-linear mapping $\Gamma(TB) \to \Gamma(TP)$ defined by $\iota(\rvf)=\parvf$, $\iota(X)=\basic{X}$, for $X$ a section of $\mathcal{D}$. Note that $\iota$ is simply the horizontal lift associated to the connection form $A_0$.
The mapping $l^{\mathcal{S}}$ (or simply $l$) shall be defined as follows: For $p\in P$ and $v\in\mathcal{D}_{\pi(p)}$, let $l_p(v):= q(\iota(X))_p$ for $X$ a section of $\mathcal{D}$ such that $X_{\pi(p)}=v$. $l_p$ is an isomorphism between $\mathcal{D}_{\pi(p)}$ and $\mathcal{S}_{p}$.

\begin{prop}\label{restrictedRecurrentKKHolonomy}
For $X$ a vector field on $B$ and $Y$ a section of $\mathcal{D}$, $D^{\mathcal{S}}_{\iota X}q(\iota Y)=q(\iota(\nad_X Y))$.

For $o\in\mathcal{L}$, $l^{\mathcal{S}}_{o*}(\g h \g o \g l^{\mathcal{D}}_{\pi(o)})=\g h \g o \g l^{\mathcal{L},\mathcal{S}}_o$.
\end{prop}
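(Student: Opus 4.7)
For the first equation I would proceed by direct computation using the explicit expressions for $D$ from the preceding proposition. Writing a vector field $X$ on $B$ as $X=\sigma\cg(X,\rvf)\rvf+p(X)$ decomposes its lift into $\iota X=\sigma\cg(X,\rvf)\parvf+\basic{p(X)}$, while $Y\in\Gamma(\mathcal{D})$ gives $\iota Y=\basic{Y}$. Applying $C^\infty$-linearity of $D$ together with the formulas
\[
D_{\parvf}\basic{Y}=\basic{(\nad_{\rvf}Y)},\qquad D_{\basic{Z}}\basic{Y}=\basic{(\nad_{Z}Y)}+\varepsilon\tfrac{i}{2}\Omega(Z,Y)\parvf\quad(Z\in\Gamma(\mathcal{D})),
\]
and the identity $\Omega(\rvf,\cdot)=0$ collapses the computation to
\[
D_{\iota X}(\iota Y)=\basic{(\nad_X Y)}+\varepsilon\tfrac{i}{2}\Omega(X,Y)\parvf.
\]
Since $q$ annihilates the component along $\parvf$ and $\iota(\nad_X Y)=\basic{(\nad_X Y)}$, applying $q$ yields $D^{\mathcal{S}}_{\iota X}q(\iota Y)=q(\iota(\nad_X Y))$.

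For the holonomy statement the key observation is that the first part exhibits $l=q\circ\iota$ as a bundle isomorphism $(\pi|_\mathcal{L})^{*}\mathcal{D}\to\mathcal{S}|_\mathcal{L}$ which intertwines $(\pi|_\mathcal{L})^{*}\nad$ with $D^{\mathcal{S}}|_\mathcal{L}$. The leaf $\mathcal{L}$ is the integral leaf of $\iota(TB)=\parvf^\perp$ through $o$, so $\pi|_\mathcal{L}\colon\mathcal{L}\to B$ is a local diffeomorphism; thanks to the flatness of $A_0$, every smooth curve in $B$ from $\pi(o)$ admits a unique $\iota$-lift in $\mathcal{L}$ starting at $o$, and every curve in $\mathcal{L}$ from $o$ arises in this way. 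The intertwining of connections together with uniqueness of the parallel transport ODE gives
\[
P^{\mathcal{S}}_{\tilde\gamma}\circ l_o=l_{\tilde\gamma(1)}\circ P^{\nad}_\gamma
\]
whenever $\tilde\gamma$ lifts $\gamma$, while flatness of $A_0$ yields $[\iota X,\iota Y]=\iota[X,Y]$ and hence the pointwise curvature identity $R^{\mathcal{S}}(\iota X,\iota Y)=l\circ R^{\nad}(X,Y)\circ l^{-1}$.

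Finally I would invoke Ambrose--Singer: the Lie algebras $\g h\g o\g l^{\mathcal{D}}_{\pi(o)}$ and $\g h\g o\g l^{\mathcal{L},\mathcal{S}}_o$ are generated by their curvature endomorphisms parallel-transported back along curves from the basepoint, and the bijection between curves and their $\iota$-lifts combined with the curvature and parallel-transport intertwinings sends one spanning set exactly to the $l_{o*}$-image of the other, proving the equality of Lie algebras. The main obstacle I anticipate is the careful justification of this correspondence, relying on the flatness of $A_0$ both to globally lift curves from $B$ into $\mathcal{L}$ and to make $\iota$ a Lie-bracket morphism; once these two ingredients are secured, the identification is formal.
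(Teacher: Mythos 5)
Your argument is correct and, since the paper states this proposition without giving a proof, it supplies precisely the missing one in what is evidently the intended way: the identity $D_{\iota X}(\iota Y)=\basic{(\nad_X Y)}+\varepsilon\tfrac{i}{2}\Omega(X,Y)\parvf$, obtained from the decomposition $\iota X=\sigma\cg(X,\rvf)\parvf+\basic{p(X)}$, the displayed formulas for $D$, the $C^\infty$-linearity of both connections in the direction slot, and $\Omega(\rvf,\cdot)=0$, gives the first claim after applying $q$; and this first claim together with the lift of curves is exactly the mechanism the author invokes later in the double-bundle proposition. One point deserves emphasis, and you handle it correctly: loops at $\pi(o)$ do \emph{not} correspond bijectively to loops at $o$ in $\mathcal{L}$, because the $A_0$-horizontal lift of a loop closes up only modulo the monodromy of the flat connection; this is why the statement concerns holonomy \emph{algebras}, and why your route through Ambrose--Singer (curvature endomorphisms transported along arbitrary curves, which do biject with their horizontal lifts into the leaf) is the right level at which to argue --- a naive transcription at the level of holonomy groups would fail. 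The remaining steps you flag as routine really are: the intertwining $P^{\mathcal{S}}_{\tilde\gamma}\circ l_o=l_{\tilde\gamma(1)}\circ P^{\nad}_\gamma$ follows from the first identity because it is tensorial in the direction and hence descends to covariant derivatives along curves, and $[\iota X,\iota Y]=\iota[X,Y]$ is exactly the flatness of $A_0$ (alternatively, the curvature correspondence $R^{\mathcal{S}}(\iota X,\iota Y)\circ l=l\circ\curvd(X,Y)$ can be read off directly from Proposition \ref{curvature} after applying $q$).
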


%

\subsection{Examples of base manifolds}
\subsubsection{K-contact manifolds}
$(B^{2n+1},\phi,\rvf,\eta,g)$ is a K-contact manifold, meaning that:
\begin{enumerate}
\item $(B^{2n+1},\phi,\rvf,\eta,g)$ is an almost contact metric manifold, {\em i.e.}
$\phi$, $\rvf$, $\eta$ are respectively a $(1,1)$-tensor, a vector field and a $1$-form on the manifold $B$, $g$ is a Riemannian metric on $B$,
$\eta(\rvf)=1$, $d\eta(\rvf,\cdot)=0$, $\phi^2=-I+\eta\otimes \rvf$,
$g(X,Y)=g(\phi X, \phi Y)+\eta(X)\eta(Y)$,
\item $\cg(\phi X,Y)=-d\eta(X,Y)$
\item $\rvf$ is a Killing vector field
\end{enumerate}

$(B^{2n+1},\phi,\rvf,\eta,g)$ verifies then the conditions of proposition \ref{recurrentD}(ii) with $\sigma=1$ and $\varepsilon=1$.

\subsubsection{Sasakian manifolds}\label{sasakian_examples}
$(B^{2n+1},\phi,\rvf,\eta,g)$ is a Sasakian manifold, meaning that:
\begin{enumerate}
\item $(B^{2n+1},\phi,\rvf,\eta,g)$ is a K-contact manifold
\item $\phi$ satisfies $(\nabla_X \phi)(Y)=g(X,Y)\rvf - \eta(Y)X$
\end{enumerate}

In this case we have $\nad \phi=0$, and as a consequence $Hol^{0,\mathcal{D}}_o \subset U(\mathcal{D}_o)$.
Additionally we see from proposition \ref{curvature} that:

\begin{prop}\label{curvatureSasaki}Suppose $(B^{2n+1},\phi,\rvf,\eta,g)$ is a Sasakian manifold. Let $X, Y, Z$ be sections of $\mathcal{D}=\rvf^\perp$.
The curvature of the covariant derivative $D$ on the manifold $P$ constructed as before expresses by:
\begin{eqnarray*}
R(\cdot,\cdot)\parvf & = & 0,\\
R(\parvf,\cdot) & = & 0,\\
R(\basic{X},\basic{Y})\basic{Z} & = & \basic{(\curvd(X,Y)Z)},\\
R(\basic{X},\basic{Y})\cpvf& = & 0,\\
R(\basic{X},\fvf)\basic{Y} & = & \cg(X,Y)\parvf,\\
R(\basic{X},\fvf)\cpvf & = & - 2 \basic{X}.\\
\end{eqnarray*}
\end{prop}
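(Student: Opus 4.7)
The plan is to feed the Sasakian identities into the general curvature formulas of Proposition~\ref{curvature}. The structural data I will use are
$$(\nac_X\phi)Y = \cg(X,Y)\rvf - \eta(Y)X, \qquad \nac_X\rvf = -\phi X,$$
the second following from the first via $\phi\rvf = 0$ and $\phi^2 = -\id + \eta\otimes\rvf$. My first step is to derive $\nad\phi = 0$: for $X, Y \in \Gamma(\mathcal{D})$, projecting the Sasakian identity by $p$ and using $p\circ\phi = \phi\circ p$ gives $(\nad_X\phi)Y = p((\nac_X\phi)Y) = -\eta(Y)p(X) = 0$, while for $X = \rvf$ one observes that $\nad_\rvf$ coincides with $\mathcal{L}_\rvf$ on $\Gamma(\mathcal{D})$ and combines this with $\mathcal{L}_\rvf\phi = 0$ established earlier.

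With $\nad\phi = 0$ in hand, Proposition~\ref{curvature} together with the pointwise identities $\cg(\phi X, \phi Y) = \cg(X,Y)$ on $\mathcal{D}$ and $\phi^2|_{\mathcal{D}} = -\id$ immediately reads off the formulas for $R(\basic{X},\basic{Y})\basic{Z}$, $R(\basic{X},\basic{Y})\cpvf$, $R(\basic{X},\fvf)\basic{Y}$ and $R(\basic{X},\fvf)\cpvf$, while $R(\cdot,\cdot)\parvf$, $R(\parvf,\fvf)$ and $R(\parvf,\basic{X})\cpvf$ are inherited unchanged. The only nontrivial remaining identity is $R(\parvf, \basic{X})\basic{Y} = 0$, which by Proposition~\ref{curvature} reduces to proving $\curvd(\rvf, X)Y = 0$ for $X, Y \in \Gamma(\mathcal{D})$.

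Since $\curvd$ is tensorial I would compute on basic sections (those with $[\rvf, X] = [\rvf, Y] = 0$, which exist locally because $\rvf$ is Killing and preserves $\phi$). Then $\nad_\rvf Y = p[\rvf, Y] = 0$ and $\nad_{[\rvf, X]}Y = 0$, so the curvature collapses to $\nad_\rvf\nad_X Y = p[\rvf, p(\nac_X Y)]$. Unfolding this with $\nac_\rvf Z = -\phi Z$ on basic sections (which follows from $\nac_\rvf Z = \nac_Z\rvf = -\phi Z$) and applying the Ricci identity converts the problem into the more compact statement $p(R^\nac(\rvf, X)Y) = 0$.

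The endgame is the classical Sasakian identity $R^\nac(X, Y)\rvf = \eta(Y)X - \eta(X)Y$, obtained by differentiating $\nac_X\rvf = -\phi X$ and using the Sasakian axiom; the pair-symmetry $\cg(R^\nac(\rvf, X)Y, Z) = \cg(R^\nac(Y, Z)\rvf, X)$ of the Riemann tensor then shows that $R^\nac(\rvf, X)Y$ has no $\mathcal{D}$-component when $X, Y \in \Gamma(\mathcal{D})$, so its projection vanishes. The main obstacle I expect is the bookkeeping of the correction term $\eta(\nac_X Y)\rvf = \cg(\phi X, Y)\rvf$ when translating between $\nac$ and $\nad$ inside $p[\rvf, p(\nac_X Y)]$; once that is organized, everything else is direct substitution into Proposition~\ref{curvature}.
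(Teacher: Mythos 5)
Your proof is correct and follows the same route as the paper, which obtains the proposition simply by specializing Proposition~\ref{curvature} with $\nad\phi=0$, $\sigma=\varepsilon=1$, $\cg(\phi X,\phi Y)=\cg(X,Y)$ and $\phi^2=-\mathrm{id}$ on $\mathcal{D}$. In fact you supply the two steps the paper leaves implicit --- the verification of $\nad\phi=0$ and, more importantly, the vanishing of $R(\parvf,\basic{X})\basic{Y}=\basic{(\curvd(\rvf,X)Y)}$ --- and your reduction of the latter to $p(R^{\nac}(\rvf,X)Y)=0$ via the Sasakian identity $R^{\nac}(X,Y)\rvf=\eta(Y)X-\eta(X)Y$ and the pair symmetry of the curvature tensor is sound (the same conclusion holds for any unit Killing field, since the associated Riemannian flow satisfies $\iota_{\rvf}\curvd=0$, but your Sasakian argument is perfectly adequate here).
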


Note that $\phi$ is parallel along any path $\gamma$.
For $X,Y$ sections of $\mathcal{D}$:
$R(\basic{X},\fvf)\basic{Y} = \cg(X,Y)\parvf$,
as a consequence we obtain:

\begin{prop}
If $(B^{2n+1},\phi,\rvf,\eta,g)$ is a Sasakian manifold the holonomy algebra of the Levi-Civita covariant derivative $D$ of $g$ on the manifold $P$ is of the form:
For the decomposition $T_oP=\langle\parvf_o\rangle\oplus (\iota\mathcal{D})_o \oplus \langle \cpvf_o \rangle$.
$$\mathfrak{hol}_o^D=\left\{ \left( \begin{smallmatrix} 0 & u^* & 0\\0 & A & -u\\0 & 0 & 0\end{smallmatrix}
\right)|\; A\in \g g, u\in \Hom(\langle \cpvf_o \rangle,(\iota\mathcal{D})_o ) \right\},$$
where $\g g=\iota_{o*}\g h \g o \g l_{\pi(o)}^{\mathcal{D}}$ is a sub-Lie-algebra of $\mathfrak{u}((\iota\mathcal{D})_o )$.\end{prop}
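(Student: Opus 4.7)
The plan is to prove both inclusions by combining Ambrose-Singer with the Sasakian curvature formulas of Proposition \ref{curvatureSasaki}.

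The first step pins down the algebraic block shape: since $\parvf$ is $D$-parallel (Proposition \ref{recurrentD}), every $A\in\mathfrak{hol}^{D}_{o}$ kills $\parvf_{o}$, and combining $\kkg$-skew-symmetry with the null pairing $\kkg(\parvf_{o},\cpvf_{o})=2\sigma\ne 0$ (together with $(\iota\mathcal{D})_{o}\perp\{\parvf_{o},\cpvf_{o}\}$) produces the claimed block shape, with $u^{*}=\frac{1}{2\sigma}\kkg(\cdot,u)$ metrically dual to $u$ and the middle block in $\mathfrak{o}((\iota\mathcal{D})_{o})$.

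The second step restricts the middle block. Since $\parvf^{\perp}$ is parallel, $A$ descends to $\mathfrak{hol}^{\mathcal{S}}_{o}$, and under $l_{o}$ this descent is exactly the middle block $A_{0}$. By Proposition \ref{curvatureSasaki} the only nonzero screen curvatures are $R^{\mathcal{S}}(\basic{X},\basic{Y})=l\circ\curvd(X,Y)\circ l^{-1}$. Using $D^{\mathcal{S}}_{\basic{Z}}l(Y)=l(\nad_{Z}Y)$, $D^{\mathcal{S}}_{\fvf}l(Y)=l(\phi Y)$ and $\nad\phi=0$, screen parallel transport along any $\gamma$ factors as $l_{\gamma(1)}\circ e^{-\lambda\phi}\circ Q^{\nad}_{\pi\gamma}\circ l_{\gamma(0)}^{-1}$ for some $\lambda\in\reels$. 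Since $\mathfrak{hol}^{\mathcal{D}}\subset\mathfrak{u}(\mathcal{D},\phi)$ commutes with $\phi$, the twist $e^{-\lambda\phi}$ conjugates $\curvd(X,Y)$ trivially, and Ambrose-Singer for $\nad$ gives $\mathfrak{hol}^{\mathcal{S}}_{o}=\iota_{o*}\mathfrak{hol}^{\mathcal{D}}_{\pi(o)}$.

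For the lower bound, Proposition \ref{curvatureSasaki} already delivers generators at $o$ in the stated form: $R(\basic{X},\basic{Y})|_{o}$ is a pure middle-block matrix with $A_{0}=\iota_{o*}\curvd(X,Y)|_{\pi(o)}$, while $R(\basic{X},\fvf)|_{o}$ is a pure $u$-matrix with $u=2\basic{X}_{o}$. The second family exhausts $(\iota\mathcal{D})_{o}$ as $X$ varies; parallel-transport conjugates of the first family, whose middle blocks are $Q^{\nad,-1}_{c}\curvd(X,Y)Q^{\nad}_{c}$ by the previous paragraph, span $\iota_{o*}\mathfrak{hol}^{\mathcal{D}}_{\pi(o)}$ by Ambrose-Singer applied to $\nad$.

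The main obstacle is the second paragraph: without $\nad\phi=0$ (the hypothesis highlighted just before the proposition as \emph{$\phi$ parallel along any path}), the $\fvf$-direction twist $e^{-\lambda\phi}$ would fail to commute with transverse curvatures and would enlarge the middle-block algebra beyond $\iota_{o*}\mathfrak{hol}^{\mathcal{D}}$; the Sasakian hypothesis is precisely what collapses this.
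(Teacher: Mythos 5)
Your argument is correct and follows exactly the route the paper indicates (the paper omits the proof but remarks that the crucial point is that $\phi$ commutes with transverse parallel transport, which is precisely your $e^{-\lambda\phi}$-twist step): block shape from $D\parvf=0$ plus $\kkg$-skew-symmetry, the middle block identified with the screen holonomy $l_{o*}\g h\g o\g l^{\mathcal{D}}_{\pi(o)}$ via the factorization of screen transport through $\nad$ and the $\phi$-twist, and the lower bound from Ambrose--Singer applied to the curvature formulas of Proposition \ref{curvatureSasaki}. The only point worth spelling out is that the parallel-transport conjugates of $R(\basic{X},\basic{Y})$ need not stay pure middle-block (since $\cpvf$ is not parallel), so one should subtract the pure-$u$ elements, already known to lie in $\mathfrak{hol}_o^D$, to isolate the middle-block generators.
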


Note that in the theorem appears $\g h \g o \g l_{\pi(o)}^{\mathcal{D}}$ and not $\g h \g o \g l_{\pi(o)}^{\mathcal{L},\mathcal{D}}$ like in proposition \ref{restrictedRecurrentKKHolonomy}. In the proof, the fact that $\phi$ commutes with parallel transport on the transverse bundle is crucial.

\section{Double bundles}
The base manifold can be obtained by a preliminary circle bundle construction. We discuss this possibility in the following:
Let $(B,\cg)$ be a pseudo-Riemannian manifold with Levi-Civita connection $\nac$. Let $\pi_1:P_1\to B$ be a first principal $S^1$-bundle over $B$ with connection form $A_1$ and metric $\kkg_1=\pi_1^*\cg+\sigma A_1 \otimes A_1$.

We have seen that the corresponding fundamental field $\rvf=\fvf$ is then a Killing field such that $\kkg_1(\fvf,\fvf)=-\sigma$.

Let $\mathcal{D}_1$ be the transverse bundle $\rvf^\perp$ equipped with the transverse covariant derivative $D^1$.
Let $\iota_p$ be the mapping $T_{\pi_1(p)}B\to \mathcal{D}_{1,p}$ defined by $\iota_p(X_p)=\basic{X}_p$ for $X$ a vector field on $B$.
$\iota_p$ is an isomorphism.

Consider a second principal $S^1$-bundle $\pi_2:P_2\to P_1$ admitting a flat connection form $A_{2,0}$.
Fix $\varepsilon_2:=\pm 1$.
Let $\eta_2:=\kkg_1(\rvf,\cdot)$ be a $1$-form on $P_1$.
Define the connection form $A_2$ by $A_2 := A_{2,0}+i\sigma\varepsilon_2 \pi_2^*\eta_2$
Let $\kkg_2$ be the metric $\pi_2^*\kkg_1 - \sigma A_2\otimes A_2$.
By proposition \ref{recurrentD} follows that
$P_2$ admits the parallel vector field $\parvf=\basic{\rvf}+\varepsilon_2 \fvf_2$ for the Levi-Civita connection associated to $\kkg_2$.

Let $\mathcal{L}$ be a leaf of the foliation determined by $\mathcal{D}_2:=\parvf^\perp$ and $o$ be a point contained in it.
Let $\mathcal{S}$ be the screen bundle determined by $\parvf$.

\begin{prop}
$\g h \g o \g l^{\mathcal{L},\mathcal{S}}_o$ is isomorphic to $\g h \g o \g l^{\nac}_{\pi_1\pi_2(o)}$
\end{prop}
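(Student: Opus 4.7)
The plan is to compose the two transverse-holonomy identifications already established in the paper, applied to the two bundles in the tower $P_2\xrightarrow{\pi_2}P_1\xrightarrow{\pi_1}B$.

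Step 1. I apply Proposition \ref{restrictedRecurrentKKHolonomy} to $\pi_2:P_2\to P_1$. Its hypotheses (a Kaluza-Klein metric over a base that carries a Killing field $\rvf$ of constant length together with a flat connection $A_{2,0}$) are satisfied by the construction at the beginning of the section, with $\rvf=\fvf$ the fundamental field of $\pi_1$ (which is Killing with $\kkg_1(\fvf,\fvf)=-\sigma$). The proposition produces the Lie algebra isomorphism
$$l^{\mathcal{S}}_{o*}:\g h\g o\g l^{\mathcal{D}_1}_{\pi_2(o)}\xrightarrow{\ \sim\ }\g h\g o\g l^{\mathcal{L},\mathcal{S}}_o,$$
where $\mathcal{D}_1=\fvf^\perp\subset TP_1$ is the transverse bundle on $P_1$ equipped with its transverse connection $D^1$.

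Step 2. I apply Proposition \ref{transverseHol} to $\pi_1:P_1\to B$. It identifies the transverse holonomy on $P_1$ with the horizontal lift (under $A_1$) of the Levi-Civita holonomy on the base:
$$\g h\g o\g l^{\mathcal{D}_1}_{\pi_2(o)}=\pi_1^*\bigl(\g h\g o\g l^{\nac}_{\pi_1\pi_2(o)}\bigr),$$
and the horizontal lift $\pi_1^*$ is itself a Lie algebra isomorphism onto its image. Composing with the isomorphism from Step 1 yields $\g h\g o\g l^{\mathcal{L},\mathcal{S}}_o\cong\g h\g o\g l^{\nac}_{\pi_1\pi_2(o)}$, as desired.

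The argument is thus a direct iteration of the two earlier results, and there is no substantive obstacle. The one bookkeeping point to verify is that the transverse connection $D^1$ on $\mathcal{D}_1\subset TP_1$ plays both roles consistently: it is the connection $\nad$ on the base when Proposition \ref{restrictedRecurrentKKHolonomy} is applied to $\pi_2$, and it is the connection $D^{\mathcal{D}}$ on the total space when Proposition \ref{transverseHol} is applied to $\pi_1$. Both identifications hold by definition, since $D^1$ is in each case the projection onto $\fvf^\perp$ of the Levi-Civita connection of $\kkg_1$.
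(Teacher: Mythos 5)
Your proof is correct and is essentially identical to the paper's: the author likewise obtains $\g h \g o \g l^{\mathcal{L},\mathcal{S}}_o=l^{\mathcal{S}}_{o*}(\iota_{\pi_2(o)*}(\g h \g o \g l^{\nac}_{\pi_1\pi_2(o)}))$ by composing Proposition \ref{transverseHol} applied to $\pi_1$ with Proposition \ref{restrictedRecurrentKKHolonomy} applied to $\pi_2$. Your closing remark that the transverse connection on $\fvf^\perp\subset TP_1$ plays both roles consistently is exactly the (implicit) content of the paper's one-line argument.
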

\begin{proof}
By propositions \ref{transverseHol} and \ref{restrictedRecurrentKKHolonomy} the holonomy algebra $\g h \g o \g l^{\mathcal{L},\mathcal{S}}_o$ verifies:
$\g h \g o \g l^{\mathcal{L},\mathcal{S}}_o=l^{\mathcal{S}}_{o*}(\iota_{\pi_2(o)*}(\g h \g o \g l^{\nac}_{\pi_1\pi_2(o)}))$
\end{proof}

\end{document}